\theoremstyle{plain}
\newtheorem{theorem}{Theorem}[section]
\newtheorem{proposition}[theorem]{Proposition}
\newtheorem{lemma}[theorem]{Lemma}
\theoremstyle{definition}
\newtheorem{definition}{Definition}[section]
\theoremstyle{remark}
\theoremstyle{example}
\numberwithin{equation}{section}
\begin{document}

\title[Convergence Theorems]{Convergence Theorems for Generalized Functional Sequences of Discrete-Time Normal Martingales}

\author{Caishi Wang}
\address[Caishi Wang]
          {School of Mathematics and Statistics,
          Northwest Normal University,
          Lanzhou, Gansu 730070,
          People's Republic of China }
\email{wangcs@nwnu.edu.cn}

\author{Jinshu Chen}
\address[Jinshu Chen]
         {School of Mathematics and Statistics,
         Northwest Normal University,
          Lanzhou, Gansu 730070,
          People's Republic of China}

\subjclass[2010]{Primary: 60H40; Secondary: 46F25}
\keywords{Generalized functional, Fock transform, Generalized martingale, Convergence theorem}

\begin{abstract}
The Fock transform recently introduced by the authors in a previous paper is applied to investigate convergence
of generalized functional sequences of a discrete-time normal martingale $M$.
A necessary and sufficient condition in terms of the Fock transform is obtained for such a sequence to be strong convergent.
A type of generalized martingales associated with $M$ are introduced and their convergence theorems are established.
Some applications are also shown.
\end{abstract}

\maketitle

\section{Introduction}\label{sec-1}

Hida's white noise analysis is essentially a theory of infinite dimensional calculus on generalized functionals
of Brownian motion \cite{hida,huang,kuo,obata}. In 1988, Ito \cite{ito} introduced his analysis of generalized Poisson functionals, which can be viewed as
a theory of infinite dimensional calculus on generalized functionals of Poisson martingale.
It is known that both Brownian motion and Poisson martingale are continuous-time normal martingales.
There are theories of white noise analysis for some other continuous-time processes (see, e.g., \cite{albe, barhoumi, di, hu,lee}).

Discrete-time normal martingales \cite{privault} also play an important role in many theoretical and applied fields.
For example, the classical random walk (a special discrete-time normal martingale) is used to establish functional central limit theorems
in probability theory \cite{dud,rud}.
It would then be interesting to develop a theory of infinite dimensional calculus on generalized functionals
of discrete-time normal martingales.

Let $M=(M_n)_{n\in \mathbb{N}}$ be a discrete-time normal martingale satisfying some mild conditions.
In a recent paper \cite{wang-chen}, we constructed generalized functionals of $M$, and introduced a transform, called the Fock transform,
to characterize those functionals.

In this paper, we apply the Fock transform \cite{wang-chen} to investigate generalized functional sequences of $M$.
First, by using the Fock transform, we obtain a necessary and sufficient condition for a generalized functional sequence of $M$ to be strong convergent.
Then we introduce a type of generalized martingales associated with $M$, called $M$-generalized martingales, which are a special class of generalized functional sequences of $M$
and include as a special case the classical martingales with respect to the filtration generated by $M$.
We establish a strong-convergent criterion in terms of the Fock transform for $M$-generalized martingales. Some other convergence criteria are also obtained.
Finally we show some applications of our main results.

Our one interesting finding is that for an $M$-generalized martingale, its strong convergence is just equivalent to its strong boundedness.

Throughout this paper, $\mathbb{N}$ designates the set of all nonnegative integers and $\Gamma$ the finite power set of $\mathbb{N}$, namely
\begin{equation}\label{eq-1-1}
\Gamma = \{\,\sigma \mid \text{$\sigma \subset \mathbb{N}$ and $\#(\sigma) < \infty$} \,\},
\end{equation}
where $\#(\sigma)$ means the cardinality of $\sigma$ as a set.
In addition, we always assume that $(\Omega, \mathcal{F}, P)$ is a given probability space with $\mathbb{E}$ denoting the expectation with respect to $P$.
We denote by $\mathcal{L}^{2}(\Omega, \mathcal{F}, P)$ the usual Hilbert space of
square integrable complex-valued functions on $(\Omega, \mathcal{F}, P)$
and use $\langle\cdot,\cdot\rangle$ and $\|\cdot\|$ to mean its inner product and norm, respectively.
By convention, $\langle\cdot,\cdot\rangle$ is conjugate-linear in its first argument and linear in its second argument.

\section{Generalized functionals}\label{sec-2}

Let $M=(M_n)_{n\in \mathbb{N}}$ be a discrete-time normal martingale on $(\Omega, \mathcal{F}, P)$
that has the chaotic representation property and
$Z=(Z_n)_{n\in \mathbb{N}}$ the discrete-time normal noise associated
with $M$ (see Appendix). We define
\begin{equation}\label{eq-2-1}
   Z_{\emptyset}=1;\quad Z_{\sigma} = \prod_{i\in \sigma}Z_i,\quad \text{$\sigma \in \Gamma$, $\sigma \neq \emptyset$}.
\end{equation}
And, for brevity, we use $\mathcal{L}^2(M)$ to mean the space of square integrable functionals of $M$, namely
\begin{equation}\label{eq-2-2}
  \mathcal{L}^2(M) = \mathcal{L}^2(\Omega, \mathcal{F}_{\infty}, P),
\end{equation}
which shares the same inner product and norm with $\mathcal{L}^2(\Omega, \mathcal{F}, P)$, namely $\langle\cdot,\cdot\rangle$ and $\|\cdot\|$.
We note that $\{Z_{\sigma}\mid \sigma \in \Gamma\}$ forms a countable orthonormal basis for $\mathcal{L}^2(M)$ (see Appendix).

\begin{lemma}\label{lem-2-1}\cite{wang-z}
Let $\sigma\mapsto\lambda_{\sigma}$ be the $\mathbb{N}$-valued function on $\Gamma$ given by
\begin{equation}\label{eq-2-3}
\lambda_{\sigma}=
\left\{
  \begin{array}{ll}
    \prod_{k\in\sigma}(k+1), & \hbox{$\sigma\neq \emptyset$, $\sigma\in\Gamma$;}\\
    1, & \hbox{$\sigma=\emptyset$, $\sigma\in\Gamma$.}
  \end{array}
\right.
\end{equation}
Then, for $p>1$, the positive term series $\sum_{\sigma\in\Gamma}\lambda^{-p}_{\sigma}$ converges and moreover
\begin{equation}\label{eq-2-4}
\sum_{\sigma\in\Gamma}\lambda^{-p}_{\sigma}\leq \exp\bigg[\sum_{k=1}^{\infty}k^{-p}\bigg]<\infty.
\end{equation}
\end{lemma}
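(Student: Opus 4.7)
The plan is to express the sum over finite subsets as an expansion of an infinite product of two-term factors, then dominate by an exponential.

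First I would rewrite the summand in product form: since $\lambda_\emptyset^{-p} = 1$ equals the empty product, we have for every $\sigma \in \Gamma$ the identity $\lambda_\sigma^{-p} = \prod_{k \in \sigma}(k+1)^{-p}$. The key observation is then that the telescoping algebraic identity
\begin{equation*}
\prod_{k=0}^{N}\bigl(1 + (k+1)^{-p}\bigr) = \sum_{\sigma \subset \{0,1,\ldots,N\}} \prod_{k \in \sigma}(k+1)^{-p}
\end{equation*}
holds, because choosing $1$ or $(k+1)^{-p}$ from each factor corresponds bijectively to selecting a subset $\sigma \subset \{0,1,\ldots,N\}$. Both sides are monotone increasing in $N$, and the right side is exactly $\sum_{\sigma \in \Gamma_N} \lambda_\sigma^{-p}$, where $\Gamma_N = \{\sigma \in \Gamma : \sigma \subset \{0,\ldots,N\}\}$ exhausts $\Gamma$ as $N \to \infty$.

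Next I would let $N \to \infty$. By monotone convergence on the right (or simply because all terms are nonnegative and $\Gamma = \bigcup_N \Gamma_N$), and by the definition of the infinite product on the left,
\begin{equation*}
\sum_{\sigma \in \Gamma} \lambda_\sigma^{-p} = \prod_{k=0}^{\infty}\bigl(1 + (k+1)^{-p}\bigr) = \prod_{j=1}^{\infty}\bigl(1 + j^{-p}\bigr).
\end{equation*}

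Finally I would apply the elementary inequality $1 + x \le e^{x}$ for $x \ge 0$ factor by factor:
\begin{equation*}
\prod_{j=1}^{\infty}\bigl(1 + j^{-p}\bigr) \le \prod_{j=1}^{\infty} \exp(j^{-p}) = \exp\!\Bigl(\sum_{j=1}^{\infty} j^{-p}\Bigr),
\end{equation*}
and the final series converges because $p > 1$, giving the stated bound and in particular the convergence of $\sum_{\sigma \in \Gamma}\lambda_\sigma^{-p}$.

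There is no real obstacle here; the only point requiring a little care is the interchange that rewrites the countable sum over $\Gamma$ as an infinite product, but this is handled cleanly by the finite identity for $\Gamma_N$ together with monotone convergence, since every term is nonnegative.
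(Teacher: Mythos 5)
Your proof is correct. The paper itself states this lemma without proof, citing \cite{wang-z}; your argument --- expanding $\prod_{k=0}^{N}\bigl(1+(k+1)^{-p}\bigr)$ over subsets of $\{0,\dots,N\}$, passing to the limit by nonnegativity, and then applying $1+x\le e^{x}$ termwise --- is the natural one and is visibly the argument that produces the exact bound $\exp\bigl[\sum_{k=1}^{\infty}k^{-p}\bigr]$ stated in the lemma. No gaps.
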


Using the $\mathbb{N}$-valued function defined by (\ref{eq-2-3}), we can construct a chain of Hilbert spaces consisting of functionals of $M$ as follows.
For $p\geq 0$, we define a norm $\|\cdot\|_p$ on $\mathcal{L}^2(M)$ through
\begin{equation}\label{eq-2-5}
  \|\xi\|_{p}^2=\sum_{\sigma\in \Gamma}\lambda_{\sigma}^{2p}|\langle Z_{\sigma}, \xi\rangle|^{2},\quad \xi \in \mathcal{L}^2(M)
\end{equation}
and put
\begin{equation}\label{eq-2-6}
  \mathcal{S}_p(M) = \big\{\, \xi \in \mathcal{L}^2(M) \mid \|\xi\|_{p}< \infty\,\big\}.
\end{equation}
It is not hard to check that $\|\cdot\|_{p}$ is a Hilbert norm and $\mathcal{S}_p(M)$ becomes a Hilbert space
with $\|\cdot\|_{p}$. Moreover, the inner product corresponding to $\|\cdot\|_{p}$ is given by
\begin{equation}\label{eq-2-7}
  \langle \xi,\eta\rangle_p
  = \sum_{\sigma\in \Gamma}\lambda_{\sigma}^{2p}\overline{\langle Z_{\sigma},\xi\rangle} \langle Z_{\sigma}, \eta\rangle,\quad
  \xi,\, \eta \in \mathcal{S}_p(M).
\end{equation}
Here $\overline{\langle Z_{\sigma},\xi\rangle}$ means the complex conjugate of $\langle Z_{\sigma},\xi\rangle$.

\begin{lemma}\label{lem-2-2}\cite{wang-chen}
For each $p\geq 0$, one has $\{Z_{\sigma}\mid \sigma\in\Gamma\} \subset \mathcal{S}_p(M)$ and moreover the system
$\{\lambda^{-p}_{\sigma}Z_{\sigma}\mid \sigma\in\Gamma\}$ forms an orthonormal basis for $\mathcal{S}_p(M)$.
\end{lemma}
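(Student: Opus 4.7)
The plan is to verify three things in sequence: (i) the inclusion $Z_\sigma\in\mathcal{S}_p(M)$ for every $\sigma$, (ii) the orthonormality of the rescaled system $\{\lambda_\sigma^{-p}Z_\sigma\}_{\sigma\in\Gamma}$ in $\mathcal{S}_p(M)$, and (iii) its completeness. Everything will be driven by the single fact, recalled just after (2.2), that $\{Z_\sigma\mid \sigma\in\Gamma\}$ is an orthonormal basis of $\mathcal{L}^2(M)$, so that $\langle Z_\rho,Z_\sigma\rangle = \delta_{\rho\sigma}$.

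For (i), substitute $\xi=Z_\sigma$ into (2.5). Orthonormality collapses the sum over $\tau\in\Gamma$ to the single term $\tau=\sigma$, giving $\|Z_\sigma\|_p^2 = \lambda_\sigma^{2p}<\infty$; hence $Z_\sigma\in\mathcal{S}_p(M)$ with $\|Z_\sigma\|_p = \lambda_\sigma^p$. For (ii), apply (2.7) to $\lambda_\sigma^{-p}Z_\sigma$ and $\lambda_\tau^{-p}Z_\tau$: the sum over $\rho$ contains at most one nonzero term (requiring $\rho=\sigma=\tau$), and the prefactors cancel to yield $\langle \lambda_\sigma^{-p}Z_\sigma,\lambda_\tau^{-p}Z_\tau\rangle_p = \delta_{\sigma\tau}$.

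For (iii), I will avoid any density or approximation argument and instead verify Parseval's identity directly. For arbitrary $\xi\in\mathcal{S}_p(M)$, the same collapse of the sum in (2.7) gives the Fourier coefficient
\begin{equation*}
\langle \lambda_\sigma^{-p}Z_\sigma,\xi\rangle_p = \lambda_\sigma^{p}\,\langle Z_\sigma,\xi\rangle.
\end{equation*}
Summing the squared moduli over $\sigma\in\Gamma$ reproduces exactly the right-hand side of (2.5), namely $\|\xi\|_p^2$. Parseval's identity in the Hilbert space $\mathcal{S}_p(M)$ is equivalent to the system being an orthonormal basis, so this closes the argument. (As a sanity check, the orthogonality condition $\langle \lambda_\sigma^{-p}Z_\sigma,\xi\rangle_p=0$ for all $\sigma$ forces $\langle Z_\sigma,\xi\rangle=0$ for all $\sigma$, hence $\xi=0$ in $\mathcal{L}^2(M)$ and a fortiori in $\mathcal{S}_p(M)$.)

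I do not anticipate any genuine obstacle; the only care required is bookkeeping around the series in (2.7), whose absolute convergence for $\xi,\eta\in\mathcal{S}_p(M)$ follows from Cauchy--Schwarz applied to the $\ell^2$ sequences $\bigl(\lambda_\rho^{p}\langle Z_\rho,\xi\rangle\bigr)_{\rho}$ and $\bigl(\lambda_\rho^{p}\langle Z_\rho,\eta\rangle\bigr)_{\rho}$. Once this justification is noted, the three steps above are essentially algebraic.
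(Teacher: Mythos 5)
Your argument is correct. Note that the paper itself gives no proof of this lemma --- it is quoted from \cite{wang-chen} --- so there is nothing internal to compare against; your direct verification (collapse of the sums in (2.5) and (2.7) by orthonormality of $\{Z_\sigma\}$ in $\mathcal{L}^2(M)$, then Parseval's identity to get completeness) is the natural and standard route, and your remark on the absolute convergence of the series in (2.7) via Cauchy--Schwarz is the right piece of bookkeeping. The only external ingredient you implicitly use is that $\mathcal{S}_p(M)$ is complete under $\|\cdot\|_p$, which the paper asserts just before the lemma, so that is fair to assume.
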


It is easy to see that $\lambda_{\sigma}\geq 1$ for all $\sigma\in \Gamma$. This implies that $\|\cdot\|_p \leq \|\cdot\|_q$
and $\mathcal{S}_q(M)\subset \mathcal{S}_p(M)$
whenever $0\leq p \leq q$. Thus we actually get a chain of Hilbert spaces of functionals of $M$:
\begin{equation}\label{eq-2-8}
 \cdots \subset \mathcal{S}_{p+1}(M) \subset \mathcal{S}_p(M)\subset  \cdots \subset \mathcal{S}_1(M) \subset \mathcal{S}_0(M)=\mathcal{L}^2(M).
\end{equation}
We now put
\begin{equation}\label{eq-2-9}
  \mathcal{S}(M)=\bigcap ^{\infty}_{p=0}\mathcal{S}_{p}(M)
\end{equation}
and endow it with the topology generated by the norm sequence $\{\|\cdot \|_{p}\}_{p\geq 0}$.
Note that, for each $ p\geq 0$, $\mathcal{S}_p(M)$ is just the completion of $\mathcal{S}(M)$ with respect to $\|\cdot\|_{p}$.
Thus $\mathcal{S}(M)$ is a countably-Hilbert space \cite{becnel, gelfand-vi}.
The next lemma, however, shows that $\mathcal{S}(M)$ even has a much better property.

\begin{lemma}\label{lem-2-3}\cite{wang-chen}
The space $\mathcal{S}(M)$ is a nuclear space, namely for any $p\geq 0$,
there exists $q> p$ such that the inclusion mapping $i_{pq}\colon \mathcal{S}_{q}(M) \rightarrow \mathcal{S}_p(M)$
defined by $i_{pq}(\xi)=\xi$ is a Hilbert-Schmidt operator.
\end{lemma}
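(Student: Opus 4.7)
The plan is to verify the Hilbert–Schmidt property of the canonical inclusion by an explicit basis-expansion calculation that ultimately reduces to the convergence result in Lemma~\ref{lem-2-1}. The key observation is that Lemma~\ref{lem-2-2} supplies a canonical orthonormal basis of $\mathcal{S}_q(M)$, namely $\{\lambda_\sigma^{-q}Z_\sigma \mid \sigma\in\Gamma\}$, while the $p$-norm of each $Z_\sigma$ is computable directly from the definition~(\ref{eq-2-5}).

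First I would recall the standard criterion: the inclusion $i_{pq}\colon \mathcal{S}_q(M)\to\mathcal{S}_p(M)$ is Hilbert–Schmidt iff $\sum_{\sigma\in\Gamma}\|i_{pq}(\lambda_\sigma^{-q}Z_\sigma)\|_p^2<\infty$. Since $\{Z_\tau\mid\tau\in\Gamma\}$ is an orthonormal basis of $\mathcal{L}^2(M)$, the Fourier coefficients $\langle Z_\tau,Z_\sigma\rangle$ reduce to Kronecker deltas, so plugging into~(\ref{eq-2-5}) yields $\|Z_\sigma\|_p^2=\lambda_\sigma^{2p}$. Therefore
\begin{equation*}
\sum_{\sigma\in\Gamma}\bigl\|i_{pq}(\lambda_\sigma^{-q}Z_\sigma)\bigr\|_p^2
=\sum_{\sigma\in\Gamma}\lambda_\sigma^{-2q}\|Z_\sigma\|_p^2
=\sum_{\sigma\in\Gamma}\lambda_\sigma^{-2(q-p)}.
\end{equation*}

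To make the right-hand side finite, I would apply Lemma~\ref{lem-2-1} with the exponent $2(q-p)$; the lemma requires $2(q-p)>1$, which is achieved by taking, for instance, $q=p+1$ (any $q>p+\tfrac12$ works). With this choice Lemma~\ref{lem-2-1} gives
\begin{equation*}
\sum_{\sigma\in\Gamma}\lambda_\sigma^{-2(q-p)}\le \exp\!\Bigl[\,\sum_{k=1}^{\infty}k^{-2(q-p)}\Bigr]<\infty,
\end{equation*}
whence $i_{pq}$ is Hilbert–Schmidt. Since $p\geq 0$ was arbitrary, nuclearity of $\mathcal{S}(M)$ follows.

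There is no real obstacle here: the argument is essentially a bookkeeping exercise once the orthonormal basis from Lemma~\ref{lem-2-2} and the summability estimate from Lemma~\ref{lem-2-1} are in hand. The only point requiring minor care is the computation $\|Z_\sigma\|_p=\lambda_\sigma^p$, which must be read directly off the defining formula~(\ref{eq-2-5}) rather than from any reproducing-kernel type argument, and the explicit choice of $q$ large enough to push the exponent past the threshold of Lemma~\ref{lem-2-1}.
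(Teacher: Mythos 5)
Your proof is correct. The paper itself imports this lemma from \cite{wang-chen} without reproducing a proof, but your argument --- testing the Hilbert--Schmidt criterion on the orthonormal basis $\{\lambda_\sigma^{-q}Z_\sigma\}$ of $\mathcal{S}_q(M)$ from Lemma~\ref{lem-2-2}, reading off $\|Z_\sigma\|_p^2=\lambda_\sigma^{2p}$ from (\ref{eq-2-5}), and reducing to $\sum_{\sigma\in\Gamma}\lambda_\sigma^{-2(q-p)}<\infty$ via Lemma~\ref{lem-2-1} with $q>p+\tfrac12$ --- is exactly the standard route these preparatory lemmas are set up to support.
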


For $p\geq 0$, we denote by $\mathcal{S}_p^*(M)$ the dual of $\mathcal{S}_p(M)$ and $\|\cdot\|_{-p}$
the norm of $\mathcal{S}_p^*(M)$. Then $\mathcal{S}_p^*(M)\subset \mathcal{S}_q^*(M)$ and
$\|\cdot\|_{-p} \geq \|\cdot\|_{-q}$ whenever $0\leq p \leq q$.
The lemma below is then an immediate consequence of the general theory of countably-Hilbert spaces (see, e.g., \cite{becnel} or \cite{gelfand-vi}).

\begin{lemma}\label{lem-2-4}\cite{wang-chen}
Let $\mathcal{S}^*(M)$ the dual of $\mathcal{S}(M)$ and endow it with the strong topology. Then
\begin{equation}\label{eq-2-10}
  \mathcal{S}^*(M)=\bigcup_{p=0}^{\infty}\mathcal{S}_p^*(M)
\end{equation}
and moreover the inductive limit topology on $\mathcal{S}^*(M)$ given by space sequence $\{\mathcal{S}_p^*(M)\}_{p\geq 0}$ coincides with the strong topology.
\end{lemma}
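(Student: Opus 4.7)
The plan is to derive Lemma~\ref{lem-2-4} directly from the abstract theory of countably-Hilbert nuclear spaces, using the fact (Lemma~\ref{lem-2-3}) that $\mathcal{S}(M)$ is such a space together with the chain structure in~(\ref{eq-2-8}). Since $\mathcal{S}(M)$ is defined as a projective limit of the Hilbert spaces $\mathcal{S}_p(M)$, its dual can be realized as an inductive limit of the dual Hilbert spaces $\mathcal{S}_p^*(M)$; nuclearity then upgrades this to the statement that the strong and inductive limit topologies coincide.

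For the set-theoretic equality, I would first prove ``$\supset$'': since the inclusion $\mathcal{S}(M)\hookrightarrow \mathcal{S}_p(M)$ is continuous for every $p\geq 0$, restriction to $\mathcal{S}(M)$ sends $\mathcal{S}_p^*(M)$ continuously into $\mathcal{S}^*(M)$, and this restriction is injective because $\mathcal{S}(M)$ is dense in each $\mathcal{S}_p(M)$ (as $\mathcal{S}_p(M)$ is the completion of $\mathcal{S}(M)$ with respect to $\|\cdot\|_p$). For ``$\subset$'', I would invoke the standard fact that the topology of $\mathcal{S}(M)$, being generated by the countable family of norms $\{\|\cdot\|_p\}_{p\geq 0}$, admits as a neighborhood base at $0$ the sets $\{\xi\in\mathcal{S}(M)\mid \|\xi\|_p<\varepsilon\}$. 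Hence any $\Phi\in\mathcal{S}^*(M)$ is continuous with respect to some single norm $\|\cdot\|_p$, and therefore extends uniquely by density to an element of $\mathcal{S}_p^*(M)$.

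For the topological equivalence, the inductive limit topology is by definition the finest locally convex topology making all the inclusions $\mathcal{S}_p^*(M)\hookrightarrow \mathcal{S}^*(M)$ continuous, so it is automatically finer than the strong topology. The reverse containment is the content of the nuclear case: using Lemma~\ref{lem-2-3}, for every $p$ one can pick $q>p$ with Hilbert--Schmidt inclusion $i_{pq}$, and from this one checks that every bounded subset of $\mathcal{S}^*(M)$ (in the strong sense) is already contained and bounded in some single $\mathcal{S}_p^*(M)$; this is the standard dual Mittag-Leffler/regularity argument for countably-Hilbert nuclear spaces, and it forces the two topologies to agree. At this point I would cite Chapter I of \cite{gelfand-vi} (or the analogous statement in \cite{becnel}) rather than reproduce the abstract argument.

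The main obstacle, if any, is not a hard calculation but rather a bookkeeping one: one must be careful to identify $\mathcal{S}_p^*(M)$ with its image in $\mathcal{S}^*(M)$ under the restriction map, and to verify that the chain $\mathcal{S}_p^*(M)\subset \mathcal{S}_q^*(M)$ for $p\leq q$ (noted just before the statement) is compatible with these identifications so that the union in~(\ref{eq-2-10}) really makes sense. Once this compatibility is in place, both assertions of the lemma follow from the cited abstract theory with essentially no extra work.
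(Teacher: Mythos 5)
Your proposal is correct and follows essentially the same route as the paper, which gives no argument of its own beyond observing that the lemma is an immediate consequence of the general theory of countably-Hilbert (nuclear) spaces and citing \cite{becnel} and \cite{gelfand-vi}. Your sketch accurately fills in the standard details (continuity with respect to a single norm via the neighborhood base, density of $\mathcal{S}(M)$ in each $\mathcal{S}_p(M)$ for the identifications, and nuclearity for the coincidence of the strong and inductive limit topologies) before deferring to the same references.
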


We mention that, by identifying $\mathcal{L}^2(M)$ with its dual, one comes to a Gel'fand triple
\begin{equation}\label{eq-2-11}
\mathcal{S}(M)\subset \mathcal{L}^2(M)\subset \mathcal{S}^*(M),
\end{equation}
which we refer to as the Gel'fand triple associated with $M$.

\begin{lemma}\label{lem-2-5}\cite{wang-chen}
The system $\{Z_{\sigma} \mid \sigma \in \Gamma\}$ is contained in $\mathcal{S}(M)$ and moreover it serves as a basis in $\mathcal{S}(M)$ in the sense that
\begin{equation}\label{eq-2-12}
  \xi = \sum_{\sigma \in \Gamma} \langle Z_{\sigma}, \xi\rangle Z_{\sigma}, \quad \xi \in \mathcal{S}(M),
\end{equation}
where $\langle\cdot,\cdot\rangle$ is the inner product of $\mathcal{L}^2(M)$ and the series converges in the topology of $\mathcal{S}(M)$.
\end{lemma}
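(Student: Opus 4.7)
The plan is to split the argument into the two claims of the lemma and rely chiefly on Lemma~\ref{lem-2-2} together with the explicit formula (\ref{eq-2-7}) for $\langle\cdot,\cdot\rangle_p$.

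For the first claim, I would simply observe that Lemma~\ref{lem-2-2} asserts $Z_\sigma\in\mathcal{S}_p(M)$ for every $p\geq 0$ and every $\sigma\in\Gamma$, so by the definition (\ref{eq-2-9}) of $\mathcal{S}(M)$ as the intersection $\bigcap_{p\geq 0}\mathcal{S}_p(M)$, the inclusion $\{Z_\sigma\mid \sigma\in\Gamma\}\subset\mathcal{S}(M)$ is immediate.

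For the expansion, fix $\xi\in\mathcal{S}(M)$ and an arbitrary $p\geq 0$. Since $\xi\in\mathcal{S}_p(M)$ and $\{\lambda_\sigma^{-p}Z_\sigma\mid \sigma\in\Gamma\}$ is an orthonormal basis of $\mathcal{S}_p(M)$ by Lemma~\ref{lem-2-2}, the abstract theory of Hilbert spaces yields the Fourier expansion
\begin{equation*}
\xi=\sum_{\sigma\in\Gamma}\bigl\langle \lambda_\sigma^{-p}Z_\sigma,\xi\bigr\rangle_p\,\lambda_\sigma^{-p}Z_\sigma,
\end{equation*}
convergent in the norm $\|\cdot\|_p$. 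The one piece of computation needed is to identify the Fourier coefficients: using (\ref{eq-2-7}) together with the $\mathcal{L}^2(M)$-orthonormality of $\{Z_\tau\}$, one finds $\langle Z_\sigma,\xi\rangle_p=\lambda_\sigma^{2p}\langle Z_\sigma,\xi\rangle$, so that
\begin{equation*}
\bigl\langle \lambda_\sigma^{-p}Z_\sigma,\xi\bigr\rangle_p\,\lambda_\sigma^{-p}Z_\sigma
=\lambda_\sigma^{-p}\cdot\lambda_\sigma^{2p}\langle Z_\sigma,\xi\rangle\cdot\lambda_\sigma^{-p}Z_\sigma
=\langle Z_\sigma,\xi\rangle\,Z_\sigma.
\end{equation*}
Thus the $p$-dependence disappears and the Fourier series collapses to $\sum_{\sigma}\langle Z_\sigma,\xi\rangle Z_\sigma$, which therefore converges to $\xi$ in $\|\cdot\|_p$ for \emph{every} $p\geq 0$. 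Since the topology of the countably-Hilbert space $\mathcal{S}(M)$ is generated by the norm sequence $\{\|\cdot\|_p\}_{p\geq 0}$, convergence in that topology is exactly convergence in each $\|\cdot\|_p$, so the series converges to $\xi$ in $\mathcal{S}(M)$, as required.

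There is no real obstacle in this argument; the only potentially delicate point is the verification that the Fourier coefficients of $\xi$ with respect to the rescaled basis $\{\lambda_\sigma^{-p}Z_\sigma\}$ of $\mathcal{S}_p(M)$ are independent of $p$ and equal to the $\mathcal{L}^2(M)$-coefficients $\langle Z_\sigma,\xi\rangle$. Once that compatibility is pinned down from formula (\ref{eq-2-7}), the statement reduces to routine bookkeeping.
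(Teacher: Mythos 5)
Your proposal is correct. Note that the paper itself offers no proof of this lemma --- it is quoted from \cite{wang-chen} --- so there is no internal argument to compare against; but your route is the natural one given the surrounding material: membership of each $Z_\sigma$ in $\mathcal{S}(M)=\bigcap_{p}\mathcal{S}_p(M)$ follows at once from Lemma~\ref{lem-2-2}, and the identification $\langle Z_\sigma,\xi\rangle_p=\lambda_\sigma^{2p}\langle Z_\sigma,\xi\rangle$ from (\ref{eq-2-7}) correctly shows that the $\mathcal{S}_p(M)$-Fourier expansion with respect to the orthonormal basis $\{\lambda_\sigma^{-p}Z_\sigma\}$ collapses to the $p$-independent series $\sum_\sigma\langle Z_\sigma,\xi\rangle Z_\sigma$, whence convergence in every $\|\cdot\|_p$ and hence in the topology of $\mathcal{S}(M)$.
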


\begin{definition}\label{def-2-1}\cite{wang-chen}
Elements of $\mathcal{S}^*(M)$ are called generalized functionals of $M$, while elements of $\mathcal{S}(M)$ are called testing functionals of $M$.
\end{definition}

Denote by $\langle\!\langle \cdot,\cdot\rangle\!\rangle$
the canonical bilinear form on $\mathcal{S}^*(M)\times \mathcal{S}(M)$, namely
\begin{equation}\label{eq-2-13}
  \langle\!\langle \Phi,\xi\rangle\!\rangle = \Phi(\xi),\quad \Phi\in \mathcal{S}^*(M),\, \xi\in \mathcal{S}(M),
\end{equation}
where $\Phi(\xi)$ means $\Phi$ acting on $\xi$ as usual.
Note that $\langle\cdot,\cdot\rangle$ denotes the inner product of $\mathcal{L}^2(M)$, which is different from
$\langle\!\langle \cdot,\cdot\rangle\!\rangle$.

\begin{definition}\label{def-2-2}\cite{wang-chen}
For $\Phi \in \mathcal{S}^*(M)$, its Fock transform is the function $\widehat{\Phi}$ on $\Gamma$ given by
\begin{equation}\label{eq-2-14}
  \widehat{\Phi}(\sigma) = \langle\!\langle \Phi, Z_{\sigma}\rangle\!\rangle,\quad \sigma \in \Gamma,
\end{equation}
where $\langle\!\langle \cdot,\cdot\rangle\!\rangle$ is the canonical bilinear form.
\end{definition}

It is easy to to verify that,
for $\Phi$, $\Psi \in \mathcal{S}^*(M)$, $\Phi=\Psi$ if and only if $\widehat{\Phi}=\widehat{\Psi}$.
Thus a generalized functional of $M$ is completely determined by its Fock transform.
The following theorem characterizes generalized functionals of $M$ through their Fock transforms.

\begin{lemma}\label{lem-2-6}\cite{wang-chen}
Let $F$ be a function on $\Gamma$. Then $F$ is the Fock transform of an element $\Phi$ of $\mathcal{S}^*(M)$ if and only if it satisfies
\begin{equation}\label{eq-2-15}
  |F(\sigma)| \leq C\lambda_{\sigma}^p,\quad \sigma \in \Gamma
\end{equation}
for some constants $C\geq 0$ and $p\geq 0$.
In that case, for $q> p+\frac{1}{2}$, one has
\begin{equation}\label{eq-2-16}
  \|\Phi\|_{-q} \leq C\bigg[\sum_{\sigma \in \Gamma}\lambda_{\sigma}^{-2(q-p)}\bigg]^{\frac{1}{2}}
\end{equation}
and in particular $\Phi \in \mathcal{S}_q^*(M)$.
\end{lemma}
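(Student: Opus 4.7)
The plan is to use the fact from Lemma \ref{lem-2-2} that $\|Z_\sigma\|_p = \lambda_\sigma^p$ for each $\sigma\in\Gamma$ and $p\geq 0$; this converts both directions of the equivalence into concrete series estimates. For \emph{necessity}, if $F=\widehat{\Phi}$ with $\Phi\in\mathcal{S}^*(M)$, then by Lemma \ref{lem-2-4} there exists $p\geq 0$ with $\Phi\in\mathcal{S}_p^*(M)$, and the duality pairing bound
\[
|F(\sigma)| = |\langle\!\langle \Phi, Z_\sigma\rangle\!\rangle| \leq \|\Phi\|_{-p}\|Z_\sigma\|_p = \|\Phi\|_{-p}\lambda_\sigma^p
\]
yields \eqref{eq-2-15} with $C=\|\Phi\|_{-p}$.

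For \emph{sufficiency}, I assume $|F(\sigma)|\leq C\lambda_\sigma^p$ and fix $q>p+\tfrac{1}{2}$. The candidate $\Phi$ is defined on $\mathcal{S}_q(M)$ by
\[
\Phi(\xi) = \sum_{\sigma\in\Gamma}\langle Z_\sigma,\xi\rangle F(\sigma),
\]
motivated by the expansion in Lemma \ref{lem-2-5}. A Cauchy--Schwarz estimate based on the factorization $\lambda_\sigma^p = \lambda_\sigma^q\cdot\lambda_\sigma^{-(q-p)}$ gives
\[
|\Phi(\xi)|\leq C\Bigl(\sum_{\sigma\in\Gamma}\lambda_\sigma^{2q}|\langle Z_\sigma,\xi\rangle|^2\Bigr)^{1/2}\Bigl(\sum_{\sigma\in\Gamma}\lambda_\sigma^{-2(q-p)}\Bigr)^{1/2}=C\|\xi\|_q\Bigl(\sum_{\sigma\in\Gamma}\lambda_\sigma^{-2(q-p)}\Bigr)^{1/2}.
\]
Since $2(q-p)>1$, Lemma \ref{lem-2-1} guarantees the bracketed sum is finite. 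Consequently the defining series converges absolutely, $\Phi$ is continuous on $\mathcal{S}_q(M)$ with the norm bound \eqref{eq-2-16}, and its restriction to $\mathcal{S}(M)$ lies in $\mathcal{S}_q^*(M)\subset\mathcal{S}^*(M)$ by Lemma \ref{lem-2-4}. The identification $\widehat{\Phi}=F$ is then immediate, since $\widehat{\Phi}(\sigma)=\Phi(Z_\sigma)=F(\sigma)$ by orthonormality of $\{Z_\tau\}_{\tau\in\Gamma}$ in $\mathcal{L}^2(M)$.

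I do not anticipate any serious obstacle: the entire argument is driven by the matching between the exponent gap $q-p>\tfrac{1}{2}$ and the summability threshold $\sum_\sigma\lambda_\sigma^{-s}<\infty$ for $s>1$ from Lemma \ref{lem-2-1}, together with the clean identity $\|Z_\sigma\|_p=\lambda_\sigma^p$ from Lemma \ref{lem-2-2}. The only item deserving a moment's care is the absolute convergence of the series defining $\Phi(\xi)$ on $\mathcal{S}_q(M)$ (as opposed to the $\mathcal{S}(M)$-topological convergence of Lemma \ref{lem-2-5}), but the Cauchy--Schwarz estimate above is precisely what supplies it, so the argument is self-bootstrapping once $q$ is chosen correctly.
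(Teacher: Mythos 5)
Your argument is correct: the necessity direction via $\|Z_\sigma\|_p=\lambda_\sigma^p$ and the duality bound, and the sufficiency direction via the Cauchy--Schwarz split $\lambda_\sigma^p=\lambda_\sigma^q\lambda_\sigma^{-(q-p)}$ combined with the summability threshold of Lemma~\ref{lem-2-1}, is exactly the standard route and yields the stated norm bound \eqref{eq-2-16}. Note that this paper states Lemma~\ref{lem-2-6} without proof (it is quoted from \cite{wang-chen}), so there is no in-text argument to compare against, but your proof is the expected one and I see no gap.
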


\section{Convergence theorems for generalized functional sequences}\label{sec-3}

Let $M=(M_n)_{n\in \mathbb{N}}$ be the same discrete-time normal martingale as described in Section~\ref{sec-2}.
In the present section, we apply the Fock transform (see Definition~\ref{def-2-2}) to establish convergence theorems for generalized functionals of $M$.

In order to prove our main results in a convenient way, we first give a preliminary proposition,
which is an immediate consequence of the general theory of countably normed spaces, especially nuclear spaces \cite{becnel, gelfand-shi, gelfand-vi},
since $\mathcal{S}(M)$ is a nuclear space (see Lemma~\ref{lem-2-3}).

\begin{proposition}\label{prop-3-1}
Let $\Phi$, $\Phi_n \in \mathcal{S}^*(M)$, $n\geq 1$, be generalized functionals of $M$. Then the following conditions are equivalent:

(i)\ \ The sequence $(\Phi_n)$ converges weakly to $\Phi$ in $\mathcal{S}^*(M)$;

(ii)\ \ The sequence $(\Phi_n)$ converges strongly to $\Phi$ in $\mathcal{S}^*(M)$;

(iii)\ \ There exists a constant $p\geq 0$ such that $\Phi$, $\Phi_n \in \mathcal{S}_p^*(M)$, $n\geq 1$,
and the sequence $(\Phi_n)$ converges to $\Phi$ in the norm of $\mathcal{S}_p^*(M)$.
\end{proposition}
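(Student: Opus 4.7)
The plan is to prove the cyclic chain (iii)$\Rightarrow$(ii)$\Rightarrow$(i)$\Rightarrow$(iii), with the nuclearity of $\mathcal{S}(M)$ supplied by Lemma~\ref{lem-2-3} doing the essential work in closing the cycle. The first two implications are essentially general nonsense. For (iii)$\Rightarrow$(ii), Lemma~\ref{lem-2-4} identifies the strong topology on $\mathcal{S}^*(M)$ with the inductive-limit topology of $\{\mathcal{S}_p^*(M)\}_{p\geq 0}$; in particular each inclusion $\mathcal{S}_p^*(M)\hookrightarrow\mathcal{S}^*(M)$ is continuous, so norm convergence in $\mathcal{S}_p^*(M)$ upgrades to strong convergence in $\mathcal{S}^*(M)$. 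And (ii)$\Rightarrow$(i) is the standard fact that strong convergence implies weak convergence in any locally convex space.

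The content lies in (i)$\Rightarrow$(iii). The first step is to apply the Banach--Steinhaus theorem: since $\mathcal{S}(M)$ is a Fr\'{e}chet (hence barrelled) space, a sequence $(\Phi_n)\subset\mathcal{S}^*(M)$ that converges pointwise on $\mathcal{S}(M)$ is automatically equicontinuous. In terms of the defining norm system $\{\|\cdot\|_p\}_{p\geq 0}$, equicontinuity means that there exist $p\geq 0$ and $C\geq 0$ with
\begin{equation*}
|\Phi_n(\xi)|\leq C\|\xi\|_p,\qquad n\geq 1,\ \xi\in\mathcal{S}(M).
\end{equation*}
Hence $\Phi_n\in\mathcal{S}_p^*(M)$ with $\|\Phi_n\|_{-p}\leq C$, and letting $n\to\infty$ in $\Phi_n(\xi)$ for each fixed $\xi$ shows that $\Phi$ satisfies the same bound, so $\Phi\in\mathcal{S}_p^*(M)$ as well.

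The second step invokes nuclearity. By Lemma~\ref{lem-2-3}, choose $q>p$ such that $i_{pq}\colon\mathcal{S}_q(M)\to\mathcal{S}_p(M)$ is Hilbert--Schmidt; its Banach adjoint gives a Hilbert--Schmidt, and in particular compact, inclusion $\mathcal{S}_p^*(M)\hookrightarrow\mathcal{S}_q^*(M)$. Therefore the bounded sequence $\{\Phi_n-\Phi\}$ in $\mathcal{S}_p^*(M)$ is relatively norm-compact in $\mathcal{S}_q^*(M)$. Any subsequential norm-limit $\Psi$ in $\mathcal{S}_q^*(M)$ agrees with $\Phi$ on $\mathcal{S}(M)$ by (i), and since $\mathcal{S}(M)$ is dense in $\mathcal{S}_q(M)$, one concludes $\Psi=\Phi$. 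A standard ``every subsequence has a further subsequence'' argument then upgrades this to norm convergence of the full sequence $\Phi_n\to\Phi$ in $\mathcal{S}_q^*(M)$, which is exactly (iii).

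The main obstacle is really the Banach--Steinhaus step, since one must know that $\mathcal{S}(M)$ is barrelled (it is, as a countably-Hilbert Fr\'{e}chet space) and must translate abstract equicontinuity into a common uniform bound with respect to a single norm $\|\cdot\|_p$. Once a common $p$ is located, the Hilbert--Schmidt compactness of the dual inclusion together with the weak-uniqueness argument are entirely routine.
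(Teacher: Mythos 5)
Your proof is correct and is precisely the standard argument from the general theory of countably-Hilbert and nuclear spaces that the paper invokes without writing out: the authors simply declare Proposition~\ref{prop-3-1} ``an immediate consequence'' of that theory, citing Becnel and Gel'fand--Shilov. Your two-step route --- Banach--Steinhaus on the barrelled Fr\'echet space $\mathcal{S}(M)$ to locate a single $p$ with $\sup_n\|\Phi_n\|_{-p}<\infty$, then the Hilbert--Schmidt (hence compact) dual inclusion $\mathcal{S}_p^*(M)\hookrightarrow\mathcal{S}_q^*(M)$ from Lemma~\ref{lem-2-3} combined with density of $\mathcal{S}(M)$ in $\mathcal{S}_q(M)$ and a subsequence argument --- is exactly the argument behind that citation, so there is nothing to correct.
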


Here we mention that ``$(\Phi_n)$ converges strongly (resp. weakly) to $\Phi$'' means that $(\Phi_n)$ converges to $\Phi$ in the strong (resp. weak) topology of $\mathcal{S}^*(M)$.
For details about various topologies on the dual of a countably normed space, we refer to \cite{becnel, gelfand-shi}.

The next theorem is one of our main results, which offers a criterion in terms of the Fock transform for checking whether or not a sequence in $\mathcal{S}^*(M)$ is strongly convergent.

\begin{theorem}\label{thr-3-1}
Let $\Phi$, $\Phi_n \in \mathcal{S}^*(M)$, $n\geq 1$, be generalized functionals of $M$.  Then the sequence $(\Phi_n)$ converges strongly to $\Phi$ in $\mathcal{S}^*(M)$ if and only if it satisfies:
\begin{enumerate}
  \item[(1)] $\widehat{\Phi_n}(\sigma)\rightarrow \widehat{\Phi}(\sigma)$ for all $\sigma\in \Gamma$;
  \item[(2)] There are constants $C\geq 0$ and $p\geq 0$ such that
\begin{equation}\label{}
   \sup_{n\geq 1}|\widehat{\Phi_n}(\sigma)| \leq C\lambda_{\sigma}^p,\quad \sigma \in \Gamma.
\end{equation}
\end{enumerate}
\end{theorem}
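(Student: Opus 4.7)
The plan is to reduce strong convergence in $\mathcal{S}^*(M)$, via Proposition~\ref{prop-3-1}, to norm convergence in a single Hilbert space $\mathcal{S}_q^*(M)$, and then to compute that norm explicitly through the Fock transform using the orthonormal basis of Lemma~\ref{lem-2-2}.

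For necessity, I would start from strong convergence and invoke Proposition~\ref{prop-3-1} to produce $p\ge 0$ with $\Phi,\Phi_n\in\mathcal{S}_p^*(M)$ and $\|\Phi_n-\Phi\|_{-p}\to 0$. Condition~(1) should then fall out of $Z_\sigma\in\mathcal{S}(M)\subset\mathcal{S}_p(M)$ and continuity of the canonical pairing, since $\widehat{\Phi_n}(\sigma)-\widehat{\Phi}(\sigma)=\langle\!\langle\Phi_n-\Phi,Z_\sigma\rangle\!\rangle$. For~(2), I would use $\|Z_\sigma\|_p=\lambda_\sigma^p$ (from Lemma~\ref{lem-2-2}) to get $|\widehat{\Phi_n}(\sigma)|\le\|\Phi_n\|_{-p}\lambda_\sigma^p$, with the uniform constant $C=\sup_n\|\Phi_n\|_{-p}$ finite because convergent sequences in a normed space are bounded.

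For sufficiency, assume (1) and (2). Passing to the limit in (2) via (1) gives $|\widehat{\Phi}(\sigma)|\le C\lambda_\sigma^p$ as well, so Lemma~\ref{lem-2-6} places $\Phi$ and all $\Phi_n$ in a common $\mathcal{S}_q^*(M)$ for every $q>p+\tfrac12$. My intention is then to expand in the orthonormal basis $\{\lambda_\sigma^{-q}Z_\sigma\}_{\sigma\in\Gamma}$ of $\mathcal{S}_q(M)$ supplied by Lemma~\ref{lem-2-2} to obtain the Parseval-type identity
\[\|\Phi_n-\Phi\|_{-q}^2=\sum_{\sigma\in\Gamma}\lambda_\sigma^{-2q}\bigl|\widehat{\Phi_n}(\sigma)-\widehat{\Phi}(\sigma)\bigr|^2.\]
Each summand is dominated by $4C^2\lambda_\sigma^{-2(q-p)}$ and, by~(1), tends to $0$; since $2(q-p)>1$, the dominating series converges by Lemma~\ref{lem-2-1}. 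Dominated convergence for series then forces $\|\Phi_n-\Phi\|_{-q}\to 0$, and a final appeal to Proposition~\ref{prop-3-1} delivers the strong convergence of $(\Phi_n)$ to $\Phi$ in $\mathcal{S}^*(M)$.

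The only delicate point is a bookkeeping one: one must calibrate $q>p+\tfrac12$ so that Lemma~\ref{lem-2-6} applies uniformly to the whole sequence and to the limit, and simultaneously that the dominating series given by Lemma~\ref{lem-2-1} converges. Beyond this compatible choice of exponents, the argument is a routine manipulation of the chain~(\ref{eq-2-8}) and of the ONB formula underlying the Fock transform, so I do not anticipate any genuinely hard step.
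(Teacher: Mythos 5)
Your proposal is correct. The necessity half is essentially the paper's own argument: Proposition~\ref{prop-3-1} produces a single $p$ with norm convergence in $\mathcal{S}_p^*(M)$, boundedness of the convergent sequence gives $C=\sup_n\|\Phi_n\|_{-p}$, and $\|Z_\sigma\|_p=\lambda_\sigma^p$ yields condition~(2), while weak convergence tested against $Z_\sigma$ yields condition~(1). The sufficiency half, however, takes a genuinely different route. The paper first applies Lemma~\ref{lem-2-6} to get $\sup_n\|\Phi_n\|_{-q}<\infty$ for $q>p+\tfrac12$, then combines this uniform bound with convergence on the total set $\{Z_\sigma\}$ to deduce weak convergence $\langle\!\langle\Phi_n,\xi\rangle\!\rangle\to\langle\!\langle\Phi,\xi\rangle\!\rangle$ on all of $\mathcal{S}_q(M)$, and finally invokes the implication (i)$\Rightarrow$(ii) of Proposition~\ref{prop-3-1}, i.e.\ the nuclearity-based equivalence of weak and strong sequential convergence. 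You instead pass the bound~(2) to the limit to place $\Phi$ itself in $\mathcal{S}_q^*(M)$, write the dual norm via the Parseval identity $\|\Phi_n-\Phi\|_{-q}^2=\sum_\sigma\lambda_\sigma^{-2q}|\widehat{\Phi_n}(\sigma)-\widehat{\Phi}(\sigma)|^2$ coming from the orthonormal basis of Lemma~\ref{lem-2-2}, and conclude by dominated convergence against the summable majorant $4C^2\lambda_\sigma^{-2(q-p)}$ of Lemma~\ref{lem-2-1}. This is a valid and in some ways preferable argument: it proves norm convergence in a fixed $\mathcal{S}_q^*(M)$ directly (a quantitatively stronger intermediate conclusion), and it only uses the soft implication (iii)$\Rightarrow$(ii) of Proposition~\ref{prop-3-1} rather than the deeper weak-implies-strong equivalence. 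The price is that you must verify the Parseval formula for the dual norm and keep the exponents $q>p+\tfrac12$ compatible, which you correctly flag; both points are routine.
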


\begin{proof}
The ``only if '' part. Let $(\Phi_n)$ converge strongly to $\Phi$ in $\mathcal{S}^*(M)$. Then, we obviously have
\begin{equation*}
\widehat{\Phi_n}(\sigma)=\langle\!\langle \Phi_n,Z_{\sigma}\rangle\!\rangle
\rightarrow
\langle\!\langle \Phi,Z_{\sigma}\rangle\!\rangle=\widehat{\Phi}(\sigma),\quad \sigma\in \Gamma,
\end{equation*}
because $\{Z_{\sigma}\mid \sigma \in \Gamma\}\subset \mathcal{S}(M)$ and $(\Phi_n)$ also converges weakly to $\Phi$.
On the other hand, by Proposition~\ref{prop-3-1}, we know that there exists $p\geq 0$ such that
$\Phi$, $\Phi_n \in \mathcal{S}_p^*(M)$, $n\geq 1$,
and $(\Phi_n)$ converges to $\Phi$ in the norm of $\mathcal{S}_p^*(M)$,
which implies that $C\equiv \sup_{n\geq 1}\|\Phi_n\|_{-p} < \infty$.
Therefore
\begin{equation*}
  \sup_{n\geq 1}|\widehat{\Phi_n}(\sigma)|
   = \sup_{n\geq 1}|\langle\!\langle \Phi_n,Z_{\sigma}\rangle\!\rangle|
   \leq \sup_{n\geq 1}\|\Phi_n\|_{-p}\|Z_{\sigma}\|_p
   = C\lambda_{\sigma}^p,\quad \sigma \in \Gamma.
\end{equation*}

The ``if'' part. Let $(\Phi_n)$ satisfy conditions (1) and (2). Then,
by taking $q > p+ \frac{1}{2}$ and using Lemma~\ref{lem-2-6}, we get
\begin{equation}\label{eq-3-2}
  \sup_{n\geq 1}\|\Phi_n\|_{-q} \leq C\bigg[\sum_{\sigma \in \Gamma}\lambda_{\sigma}^{-2(q-p)}\bigg]^{\frac{1}{2}},
\end{equation}
in particular $\Phi_n \in \mathcal{S}_q^*(M)$, $n\geq 1$. On the other hand, $\{Z_{\sigma} \mid \sigma \in \Gamma\}$ is total in $\mathcal{S}_q(M)$,
which, together with (\ref{eq-3-2}) as well as the property
\begin{equation*}
  \langle\!\langle \Phi_n,Z_{\sigma}\rangle\!\rangle
   =\widehat{\Phi_n}(\sigma)\rightarrow \widehat{\Phi}(\sigma)
   = \langle\!\langle \Phi,Z_{\sigma}\rangle\!\rangle,\quad \sigma\in \Gamma,
\end{equation*}
implies that
$\Phi \in \mathcal{S}_q^*(M)$ and
\begin{equation*}
\langle\!\langle \Phi_n,\xi\rangle\!\rangle \rightarrow \langle\!\langle \Phi,\xi\rangle\!\rangle,\quad \forall\, \xi\in \mathcal{S}_q(M).
\end{equation*}
Thus $(\Phi_n)$ converges weakly to $\Phi$ in $\mathcal{S}^*(M)$, which together with Proposition~\ref{prop-3-1} implies that
$(\Phi_n)$ converges strongly to $\Phi$ in $\mathcal{S}^*(M)$.
\end{proof}

In a similar way we can prove the following theorem, which is slightly different form Theorem~\ref{thr-3-1}, but more convenient to use.

\begin{theorem}\label{thr-3-2}
Let $(\Phi_n) \subset \mathcal{S}^*(M)$ be a sequence of generalized functionals of $M$.  Suppose
$\big(\widehat{\Phi_n}(\sigma)\big)$ converges for all $\sigma\in \Gamma$, and moreover there are constants $C\geq 0$ and $p\geq 0$ such that
\begin{equation}\label{}
   \sup_{n\geq 1}|\widehat{\Phi_n}(\sigma)| \leq C\lambda_{\sigma}^p,\quad \sigma \in \Gamma.
\end{equation}
Then there exists a generalized functional $\Phi\in \mathcal{S}^*(M)$ such that $(\Phi_n)$ converges strongly to $\Phi$.
\end{theorem}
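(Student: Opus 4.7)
The plan is to reduce Theorem~\ref{thr-3-2} to Theorem~\ref{thr-3-1} by constructing the candidate limit $\Phi$ from the pointwise limits of the Fock transforms and then verifying that the hypotheses of Theorem~\ref{thr-3-1} are met.

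First I would define a function $F\colon \Gamma \to \mathbb{C}$ by
\begin{equation*}
  F(\sigma) = \lim_{n\to\infty} \widehat{\Phi_n}(\sigma), \quad \sigma \in \Gamma,
\end{equation*}
which makes sense by the hypothesis that $(\widehat{\Phi_n}(\sigma))$ converges for every $\sigma \in \Gamma$. Passing to the limit in the uniform bound $|\widehat{\Phi_n}(\sigma)| \leq C\lambda_\sigma^p$ gives $|F(\sigma)| \leq C\lambda_\sigma^p$ for all $\sigma \in \Gamma$. By the characterization in Lemma~\ref{lem-2-6}, there is a (necessarily unique) generalized functional $\Phi \in \mathcal{S}^*(M)$ whose Fock transform equals $F$; in particular $\Phi \in \mathcal{S}_q^*(M)$ for any $q > p + \tfrac{1}{2}$.

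With $\Phi$ in hand, the sequence $(\Phi_n)$ and the limit $\Phi$ satisfy condition~(1) of Theorem~\ref{thr-3-1} by construction, since $\widehat{\Phi_n}(\sigma) \to F(\sigma) = \widehat{\Phi}(\sigma)$ for every $\sigma \in \Gamma$. Condition~(2) of Theorem~\ref{thr-3-1} is exactly the uniform growth bound that we already assumed on $(\widehat{\Phi_n})$. Therefore Theorem~\ref{thr-3-1} applies and yields strong convergence of $(\Phi_n)$ to $\Phi$ in $\mathcal{S}^*(M)$.

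I do not expect any real obstacle here: the whole argument is essentially an application of Lemma~\ref{lem-2-6} to produce the limit and Theorem~\ref{thr-3-1} to upgrade the coefficientwise convergence to strong convergence. The only point that requires a moment of care is confirming that the pointwise limit $F$ indeed inherits the polynomial growth bound, which is immediate from taking $n \to \infty$ in a fixed inequality.
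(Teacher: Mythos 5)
Your proof is correct and is essentially the argument the paper intends: the paper omits the proof of Theorem~\ref{thr-3-2} with the remark that it is proved ``in a similar way'' to Theorem~\ref{thr-3-1}, and your route --- take the pointwise limit $F$ of the Fock transforms, note it inherits the bound $|F(\sigma)|\leq C\lambda_{\sigma}^{p}$, invoke Lemma~\ref{lem-2-6} to realize $F$ as $\widehat{\Phi}$ for some $\Phi\in\mathcal{S}^*(M)$, and then apply Theorem~\ref{thr-3-1} --- is the natural and clean way to carry that out. No gaps.
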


\section{$M$-generalized martingales and their convergence theorems}

In this section, we first introduce a type of generalized martingales associated with $M$,
which we call $M$-generalized martingales, and then we use the Fock transform to a give necessary and sufficient condition for such a generalized martingale to be strongly convergent. Some other convergence results are also obtained.

For a nonnegative integer $n\geq 0$, we denote by $\Gamma\!_{n]}$ the power set of $\{0,1,\cdots, n\}$, namely
\begin{equation}\label{eq-4-1}
  \Gamma\!_{n]}=\{\,\sigma \mid \sigma\subset \{0,1, \cdots, n\}\,\}.
\end{equation}
Clearly $\Gamma\!_{n]} \subset \Gamma$. We use $\mathbf{I}_{n]}$ to mean the indicator of $\Gamma\!_{n]}$, which is a function on $\Gamma$ given by
\begin{equation}\label{eq-4-2}
  \mathbf{I}_{n]}(\sigma)=
  \left\{
    \begin{array}{ll}
      1, & \hbox{$\sigma \in \Gamma\!_{n]}$;} \\
      0, & \hbox{$\sigma \notin \Gamma\!_{n]}$.}
    \end{array}
  \right.
\end{equation}

\begin{definition}\label{def-4-1}
A sequence $(\Phi_n)_{n\geq 0} \subset \mathcal{S}^*(M)$ is called an $M$-generalized martingale if it satisfies that
\begin{equation}\label{eq-4-3}
  \widehat{\Phi_n}(\sigma) = \mathbf{I}_{n]}(\sigma)\widehat{\Phi_{n+1}}(\sigma),\quad \sigma \in \Gamma,\, n\geq 0,
\end{equation}
where $\mathbf{I}_{n]}$ mean the indicator of $\Gamma\!_{n]}$ as defined by (\ref{eq-4-2}).
\end{definition}

Let $\mathfrak{F}=(\mathcal{F}_n)_{n\geq 0}$ be the filtration on $(\Omega, \mathcal{F}, P)$  generated by $Z=(Z_n)_{n\geq 0}$, namely
\begin{equation}\label{}
  \mathcal{F}_n = \sigma\{Z_k \mid 0\leq k \leq n\},\quad n\geq 0.
\end{equation}
The following theorem justifies Definition~\ref{def-4-1}.

\begin{theorem}\label{thr-4-1}
Suppose $(\xi_n)_{n\geq 1}\subset \mathcal{L}^2(M)$ is a martingale with respect to filtration $\mathfrak{F}=(\mathcal{F}_n)_{n\geq 0}$.
Then $(\xi_n)_{n\geq 1}$ is an $M$-generalized martingale.
\end{theorem}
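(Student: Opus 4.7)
The plan is to verify the defining identity (\ref{eq-4-3}) by direct computation, using (a) the chaotic representation property to identify which basis elements $Z_{\sigma}$ can contribute to an $\mathcal{F}_n$-measurable functional, and (b) the classical martingale identity $\mathbb{E}[\xi_{n+1}\mid \mathcal{F}_n]=\xi_n$ together with adaptedness of the $Z_{\sigma}$'s.

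First I would note that, since $\xi_n\in \mathcal{L}^2(M)$ is viewed as a generalized functional via the Gel'fand triple (\ref{eq-2-11}), its Fock transform reduces to an inner product in $\mathcal{L}^2(M)$, giving an expression of the form $\widehat{\xi_n}(\sigma)=\langle Z_{\sigma},\xi_n\rangle$ (up to the conjugation convention used for $\langle\cdot,\cdot\rangle$). The proof then splits into two cases according to whether $\sigma\in \Gamma_{n]}$ or $\sigma\notin \Gamma_{n]}$, which matches exactly the right-hand side of (\ref{eq-4-3}).

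In the first case, $\sigma\notin \Gamma_{n]}$, one must show $\widehat{\xi_n}(\sigma)=0$. The key point is that, because $Z=(Z_k)_{k\geq 0}$ generates the filtration $\mathfrak{F}$ and has the chaotic representation property, the system $\{Z_{\tau}\mid \tau\in \Gamma_{n]}\}$ is an orthonormal basis of $\mathcal{L}^2(\Omega,\mathcal{F}_n,P)$ (this is the finite-horizon version of the fact recalled in the appendix, and is the main technical ingredient I would need to cite). Since $\xi_n$ is $\mathcal{F}_n$-measurable, its expansion in the basis $\{Z_{\tau}\}_{\tau\in\Gamma}$ is supported on $\Gamma_{n]}$, and orthogonality immediately yields $\langle Z_{\sigma},\xi_n\rangle=0$ for $\sigma\notin \Gamma_{n]}$. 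In the second case, $\sigma\in \Gamma_{n]}$, the functional $Z_{\sigma}$ is $\mathcal{F}_n$-measurable; pulling it inside the conditional expectation and using $\mathbb{E}[\xi_{n+1}\mid \mathcal{F}_n]=\xi_n$ I obtain
\begin{equation*}
\langle Z_{\sigma},\xi_n\rangle
=\mathbb{E}\bigl[\overline{Z_{\sigma}}\,\xi_n\bigr]
=\mathbb{E}\bigl[\overline{Z_{\sigma}}\,\mathbb{E}[\xi_{n+1}\mid \mathcal{F}_n]\bigr]
=\mathbb{E}\bigl[\overline{Z_{\sigma}}\,\xi_{n+1}\bigr]
=\langle Z_{\sigma},\xi_{n+1}\rangle,
\end{equation*}
so $\widehat{\xi_n}(\sigma)=\widehat{\xi_{n+1}}(\sigma)$. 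Combining the two cases produces exactly $\widehat{\xi_n}(\sigma)=\mathbf{I}_{n]}(\sigma)\widehat{\xi_{n+1}}(\sigma)$ for every $\sigma\in\Gamma$, which is Definition~\ref{def-4-1}.

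The main obstacle, if any, is making the first case rigorous, i.e., identifying $\mathcal{L}^2(\Omega,\mathcal{F}_n,P)$ with the closed linear span of $\{Z_{\tau}\mid \tau\in \Gamma_{n]}\}$. Once this is in hand (it follows from the appendix's description of the discrete-time normal noise $Z$ and its chaotic representation property), the rest is a short and entirely formal computation. The argument also implicitly shows that one need not assume any decay or growth on the $\xi_n$ beyond square-integrability, so the inclusion of classical $\mathfrak{F}$-martingales among $M$-generalized martingales is unconditional.
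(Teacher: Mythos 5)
Your proposal is correct, and it rests on the same underlying mechanism as the paper's proof, just organized differently. The paper proves the identity in one pass: it expands $\xi_{n+1}=\sum_{\tau}\langle Z_{\tau},\xi_{n+1}\rangle Z_{\tau}$, applies $\mathbb{E}[\,\cdot\mid\mathcal{F}_n]$ term by term using the single identity $\mathbb{E}[Z_{\tau}\mid\mathcal{F}_n]=\mathbf{I}_{n]}(\tau)Z_{\tau}$, and then reads off the Fock transform. You instead split on whether $\sigma\in\Gamma_{n]}$, handling the second case by the tower property (which is clean and needs nothing beyond adaptedness of $Z_{\sigma}$) and the first case by the finite-horizon basis fact that $\{Z_{\tau}\mid\tau\in\Gamma_{n]}\}$ spans $\mathcal{L}^2(\Omega,\mathcal{F}_n,P)$. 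That fact is true, but note that its natural proof is exactly the paper's one-liner: condition the chaos expansion of an $\mathcal{F}_n$-measurable $\xi$ on $\mathcal{F}_n$ and use $\mathbb{E}[Z_{\tau}\mid\mathcal{F}_n]=\mathbf{I}_{n]}(\tau)Z_{\tau}$. So the lemma you flag as your ``main technical ingredient'' is a consequence of, rather than a substitute for, the identity the paper uses directly; you could shorten your argument by citing that identity in both cases (in the first case, $\langle Z_{\sigma},\xi_n\rangle=\langle Z_{\sigma},\mathbb{E}[\xi_n\mid\mathcal{F}_n]\rangle$ forces the coefficient to vanish for $\sigma\notin\Gamma_{n]}$). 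What your version buys is a slightly more transparent probabilistic reading --- the two cases correspond visibly to ``$Z_{\sigma}$ is $\mathcal{F}_n$-measurable'' versus ``$\xi_n$ has no component along $Z_{\sigma}$'' --- at the cost of invoking one extra structural fact. Your closing remark that no hypothesis beyond square-integrability is needed is also consistent with the paper.
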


\begin{proof}
By the assumptions, $(\xi_n)_{n\geq 1}$ satisfies that the following conditions
\begin{equation}\label{eq-4-5}
  \xi_n = \mathbb{E}[\xi_{n+1}\,|\, \mathcal{F}_n],\quad n\geq 0,
\end{equation}
where $\mathbb{E}[\,\cdot \mid\! \mathcal{F}_n]$ means the conditional expectation given $\sigma$-algebra $\mathcal{F}_n$.
Note that
\begin{equation*}
\mathbb{E}[Z_{\tau}\,|\, \mathcal{F}_n] = \mathbf{I}_{n]}(\tau)Z_{\tau},\quad  \tau\in \Gamma,
\end{equation*}
which, together with (\ref{eq-4-5}) and the expansion
$\xi_{n+1} = \sum_{\tau \in \Gamma}\langle Z_{\tau}, \xi_{n+1}\rangle Z_{\tau}$,
gives
\begin{equation*}
  \xi_n = \mathbb{E}[\xi_{n+1}\,|\, \mathcal{F}_n]
        = \sum_{\tau \in \Gamma}\langle Z_{\tau}, \xi_{n+1}\rangle \mathbb{E}[Z_{\tau}\,|\, \mathcal{F}_n]
        = \sum_{\tau \in \Gamma}\langle Z_{\tau}, \xi_{n+1}\rangle \mathbf{I}_{n]}(\tau)Z_{\tau}.
\end{equation*}
Taking Fock transforms yields
\begin{equation*}
  \widehat{\xi_n}(\sigma)
        = \sum_{\tau \in \Gamma}\langle \xi_{n+1}, Z_{\tau}\rangle \mathbf{I}_{n]}(\tau)\widehat{Z_{\tau}}(\sigma)
        = \langle  \xi_{n+1}, Z_{\sigma}\rangle \mathbf{I}_{n]}(\sigma)
        = \mathbf{I}_{n]}(\sigma)\widehat{\xi_{n+1}}(\sigma),
\end{equation*}
where $\sigma \in \Gamma$. Thus $(\xi_n)_{n\geq 1}$ is an $M$-generalized martingale.
\end{proof}

The next theorem gives a necessary and sufficient condition in terms of the Fock transform for an $M$-generalized martingale to be strongly convergent.

\begin{theorem}\label{thr-4-2}
Let $(\Phi_n)_{n\geq 1} \subset \mathcal{S}^*(M)$ be an $M$-generalized martingale. Then the following two conditions are equivalent:
\begin{enumerate}
  \item[(1)] $(\Phi_n)_{n\geq 1}$ is strongly convergent in $\mathcal{S}^*(M)$;
  \item[(2)] There are constants $C\geq 0$ and $p\geq 0$ such that
\begin{equation}\label{}
   \sup_{n\geq 1}|\widehat{\Phi_n}(\sigma)| \leq C\lambda_{\sigma}^p,\quad \sigma \in \Gamma.
\end{equation}
\end{enumerate}
\end{theorem}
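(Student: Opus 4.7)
The plan is to reduce Theorem~\ref{thr-4-2} to the two general convergence results already proved in Section~\ref{sec-3}, namely Theorem~\ref{thr-3-1} and Theorem~\ref{thr-3-2}, exploiting the special structure of an $M$-generalized martingale.

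For the implication $(1) \Rightarrow (2)$, I would simply invoke the ``only if'' direction of Theorem~\ref{thr-3-1}: strong convergence of $(\Phi_n)$ to some limit $\Phi \in \mathcal{S}^*(M)$ immediately yields constants $C, p\geq 0$ with $\sup_{n\geq 1}|\widehat{\Phi_n}(\sigma)| \leq C\lambda_\sigma^p$ for all $\sigma\in\Gamma$. This direction requires no use of the martingale property at all.

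For the converse $(2)\Rightarrow (1)$, the strategy is to verify the hypotheses of Theorem~\ref{thr-3-2}. The uniform growth bound is given, so all that remains is to show that $(\widehat{\Phi_n}(\sigma))_{n\geq 1}$ converges for each fixed $\sigma \in \Gamma$. Here the martingale identity $\widehat{\Phi_n}(\sigma) = \mathbf{I}_{n]}(\sigma)\widehat{\Phi_{n+1}}(\sigma)$ does the work: since $\sigma$ is a finite subset of $\mathbb{N}$, there is some $N=N(\sigma)$ with $\sigma \subset \{0,1,\dots,N\}$, and hence $\sigma\in\Gamma\!_{n]}$ for every $n\geq N$. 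Thus $\mathbf{I}_{n]}(\sigma)=1$ for $n\geq N$, and the defining relation collapses to $\widehat{\Phi_n}(\sigma)=\widehat{\Phi_{n+1}}(\sigma)$ for all $n\geq N$. So the numerical sequence $(\widehat{\Phi_n}(\sigma))$ is eventually constant, and a fortiori convergent. Applying Theorem~\ref{thr-3-2} then produces a $\Phi\in\mathcal{S}^*(M)$ to which $(\Phi_n)$ converges strongly.

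There is no real obstacle here; the content of the theorem lies entirely in the observation that the martingale property forces the Fock transform at each $\sigma$ to stabilize after finitely many steps, so the pointwise convergence hypothesis of Theorem~\ref{thr-3-2} is automatic for $M$-generalized martingales. Consequently strong boundedness (in the sense of the uniform $\lambda_\sigma^p$-bound on the Fock transforms) is not merely necessary but actually sufficient for strong convergence, which is exactly the ``interesting finding'' highlighted in the introduction.
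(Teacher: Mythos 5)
Your proposal is correct and follows essentially the same route as the paper: the forward direction is dispatched by the ``only if'' part of Theorem~\ref{thr-3-1}, and the converse is reduced to Theorem~\ref{thr-3-2} by observing that for each fixed $\sigma\in\Gamma$ the martingale identity forces $\widehat{\Phi_n}(\sigma)$ to be eventually constant once $\sigma\in\Gamma\!_{n]}$. No gaps.
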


\begin{proof}
By Theorem~\ref{thr-3-1}, we need only to prove ``$(2)\Rightarrow (1)$''.
Let $\sigma \in \Gamma$ be taken. Then by the definition of $M$-generalized martingales (see Definition~\ref{def-4-1}) we have
\begin{equation*}
  \widehat{\Phi_m}(\sigma) = \mathbf{I}_{m]}(\sigma)\widehat{\Phi_{m+k}}(\sigma),\quad m,\, k\geq 0.
\end{equation*}
Now take $n_0\geq 0$ such that $\sigma\in \Gamma_{n_0]}$. Then $\mathbf{I}_{{n_0}]}(\sigma)=1$ and moreover
\begin{equation*}
  \widehat{\Phi_{n_0}}(\sigma)
   = \mathbf{I}_{{n_0}]}(\sigma)\widehat{\Phi_{n}}(\sigma)
   = \widehat{\Phi_{n}}(\sigma),\quad n>n_0,
\end{equation*}
which implies $\big(\widehat{\Phi_n}(\sigma)\big)$ converges. Thus, by Theorem~\ref{thr-3-2}, $(\Phi_n)_{n\geq 1}$ is strongly convergent in $\mathcal{S}^*(M)$.
\end{proof}

\begin{theorem}\label{thr-4-3}
Let $D$  be a subset of $\mathcal{S}^*(M)$. Then the following two conditions are equivalent:
\begin{enumerate}
  \item[(1)] There is a constant $p\geq 0$ such that $D$ is contained and bounded in $\mathcal{S}_p^*(M)$;
  \item[(2)] There are constants $C\geq 0$ and $p\geq 0$ such that
\begin{equation}\label{}
   \sup_{\Phi \in D}|\widehat{\Phi}(\sigma)| \leq C\lambda_{\sigma}^p,\quad \sigma \in \Gamma.
\end{equation}
\end{enumerate}
\end{theorem}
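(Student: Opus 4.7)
The plan is to prove the equivalence directly from the duality bound between $\mathcal{S}_p^*(M)$ and $\mathcal{S}_p(M)$ together with Lemma~\ref{lem-2-6}. The proof is essentially a ``uniform-in-$\Phi$'' version of the two halves of the argument used for Theorem~\ref{thr-3-1}, with the role of $\sup_{n\geq 1}$ played by $\sup_{\Phi\in D}$.

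For the direction $(1)\Rightarrow (2)$: assume $D\subset \mathcal{S}_p^*(M)$ with $K := \sup_{\Phi\in D}\|\Phi\|_{-p}<\infty$. By Lemma~\ref{lem-2-2}, the system $\{\lambda_\sigma^{-p}Z_\sigma\mid \sigma\in\Gamma\}$ is an orthonormal basis of $\mathcal{S}_p(M)$, so $\|Z_\sigma\|_p = \lambda_\sigma^p$ for every $\sigma\in\Gamma$. Applying the canonical dual pairing estimate $|\langle\!\langle \Phi,Z_\sigma\rangle\!\rangle|\leq \|\Phi\|_{-p}\|Z_\sigma\|_p$ uniformly over $\Phi\in D$ then yields
\begin{equation*}
\sup_{\Phi\in D}|\widehat{\Phi}(\sigma)| = \sup_{\Phi\in D}|\langle\!\langle \Phi,Z_\sigma\rangle\!\rangle| \leq K\lambda_\sigma^p,\quad \sigma\in\Gamma,
\end{equation*}
which is exactly (2) with $C=K$.

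For the direction $(2)\Rightarrow (1)$: assume the uniform estimate $|\widehat{\Phi}(\sigma)|\leq C\lambda_\sigma^p$ for all $\Phi\in D$ and $\sigma\in\Gamma$. Since each $\widehat{\Phi}$ individually satisfies the growth bound (\ref{eq-2-15}) of Lemma~\ref{lem-2-6} with the \emph{same} constants $C$ and $p$, the conclusion (\ref{eq-2-16}) of that lemma applies for any fixed $q>p+\tfrac{1}{2}$ and gives, uniformly in $\Phi\in D$,
\begin{equation*}
\|\Phi\|_{-q} \leq C\bigg[\sum_{\sigma\in\Gamma}\lambda_\sigma^{-2(q-p)}\bigg]^{\frac{1}{2}}.
\end{equation*}
Thus $D\subset \mathcal{S}_q^*(M)$ and $D$ is bounded in the norm $\|\cdot\|_{-q}$, which is (1) with $p$ replaced by $q$.

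There is no real obstacle here; the theorem is a routine ``set-valued'' reformulation of the estimates already packaged in Lemma~\ref{lem-2-6} and in the proof of Theorem~\ref{thr-3-1}. The only point requiring a little care is that in $(2)\Rightarrow(1)$ the index $p$ appearing in the conclusion is generally larger than the one in the hypothesis, because one must pay the usual $+\tfrac{1}{2}$ to convert the pointwise bound on $\widehat{\Phi}$ into a Hilbert-space norm bound via summability of $\sum_\sigma \lambda_\sigma^{-2(q-p)}$, as guaranteed by Lemma~\ref{lem-2-1}.
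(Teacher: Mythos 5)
Your proof is correct and follows essentially the same route as the paper: the paper dismisses $(1)\Rightarrow(2)$ as obvious (you simply spell out the duality estimate $|\langle\!\langle\Phi,Z_\sigma\rangle\!\rangle|\leq\|\Phi\|_{-p}\|Z_\sigma\|_p$ with $\|Z_\sigma\|_p=\lambda_\sigma^p$), and for $(2)\Rightarrow(1)$ it applies Lemma~\ref{lem-2-6} with $q>p+\tfrac12$ to get the uniform bound on $\|\Phi\|_{-q}$, exactly as you do.
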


\begin{proof}
Obviously, condition (1) implies condition (2). We now verify the inverse implication relation. In fact, under condition (2), by using Lemma~\ref{lem-2-6} we have
\begin{equation*}
 \sup_{\Phi \in D} \|\Phi\|_{-q} \leq C\bigg[\sum_{\sigma \in \Gamma}\lambda_{\sigma}^{-2(q-p)}\bigg]^{\frac{1}{2}},
\end{equation*}
where $q>p+\frac{1}{2}$, which clearly implies condition (1).
\end{proof}

The next theorem shows that for an $M$-generalized martingale, its strong (weak) convergence is just equivalent to its strong (weak) boundedness.

\begin{theorem}\label{thr-4-4}
Let $(\Phi_n)_{n\geq 1} \subset \mathcal{S}^*(M)$ be an $M$-generalized martingale. Then the following conditions are equivalent:
\begin{enumerate}
  \item[(1)] $(\Phi_n)_{n\geq 1}$ is strongly convergent in $\mathcal{S}^*(M)$;
  \item[(2)] $(\Phi_n)_{n\geq 1}$ is weakly bounded in $\mathcal{S}^*(M)$;
  \item[(3)] $(\Phi_n)_{n\geq 1}$ is strongly bounded in $\mathcal{S}^*(M)$;
  \item[(4)] $(\Phi_n)_{n\geq 1}$ is bounded in $\mathcal{S}_p^*(M)$ for some $p\geq 0$.
\end{enumerate}
\end{theorem}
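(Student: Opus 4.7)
The plan is to establish the cyclic chain of implications $(1)\Rightarrow (3)\Rightarrow (2)\Rightarrow (4)\Rightarrow (1)$. Only the step $(4)\Rightarrow (1)$ will invoke the $M$-generalized martingale hypothesis; the other three implications are purely topological consequences of the fact that $\mathcal{S}(M)$ is a nuclear Fr\'echet space and $\mathcal{S}^*(M)$ its strong dual.

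The two easy implications $(1)\Rightarrow (3)$ and $(3)\Rightarrow (2)$ are standard: in any topological vector space a convergent sequence is bounded, and the weak topology on $\mathcal{S}^*(M)$ is coarser than the strong topology, so strong boundedness forces weak boundedness. For $(4)\Rightarrow (1)$, I would observe that if $\sup_{n\geq 1}\|\Phi_n\|_{-p}\leq C$, then, using $\|Z_\sigma\|_p=\lambda_\sigma^p$ (which follows from Lemma~\ref{lem-2-2}),
\begin{equation*}
|\widehat{\Phi_n}(\sigma)|=|\langle\!\langle \Phi_n,Z_\sigma\rangle\!\rangle|\leq \|\Phi_n\|_{-p}\|Z_\sigma\|_p\leq C\lambda_\sigma^p,\quad \sigma\in\Gamma.
\end{equation*}
This is precisely condition (2) of Theorem~\ref{thr-4-2}, so since $(\Phi_n)$ is an $M$-generalized martingale by hypothesis, Theorem~\ref{thr-4-2} immediately delivers strong convergence. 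One can equivalently rephrase (4) as the above growth estimate using Theorem~\ref{thr-4-3}, and then appeal to Theorem~\ref{thr-4-2}.

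The main work lies in $(2)\Rightarrow (4)$. Weak boundedness of $(\Phi_n)$ in $\mathcal{S}^*(M)$ is by definition the pointwise bound
\begin{equation*}
\sup_{n\geq 1}|\langle\!\langle \Phi_n,\xi\rangle\!\rangle|<\infty,\quad \xi\in\mathcal{S}(M).
\end{equation*}
My plan is to apply the Banach--Steinhaus uniform boundedness principle to the pointwise bounded family $\{\Phi_n\}_{n\geq 1}$ of continuous linear functionals on $\mathcal{S}(M)$. Since $\mathcal{S}(M)$ is a countably-Hilbert space complete in its projective limit topology, it is a Fr\'echet space, so Banach--Steinhaus produces constants $p\geq 0$ and $C\geq 0$ with $|\Phi_n(\xi)|\leq C\|\xi\|_p$ for every $n\geq 1$ and every $\xi\in\mathcal{S}(M)$. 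By density of $\mathcal{S}(M)$ in $\mathcal{S}_p(M)$ (as noted just after \eqref{eq-2-9}), each $\Phi_n$ then extends uniquely to an element of $\mathcal{S}_p^*(M)$ with $\|\Phi_n\|_{-p}\leq C$, which is precisely condition (4). The hard part is this Banach--Steinhaus reduction; once it is in place, the rest of the cycle is a routine application of the Fock transform machinery developed in Sections~\ref{sec-2} and~\ref{sec-3}.
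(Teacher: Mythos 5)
Your proposal is correct and follows essentially the same route as the paper: the substantive implication linking boundedness to convergence goes through Theorem~\ref{thr-4-2} (together with the bound $|\widehat{\Phi_n}(\sigma)|\leq \|\Phi_n\|_{-p}\|Z_\sigma\|_p=C\lambda_\sigma^p$, i.e.\ Theorem~\ref{thr-4-3}), exactly as in the paper. The only difference is that you spell out the equivalence of (2), (3), (4) via Banach--Steinhaus on the Fr\'echet space $\mathcal{S}(M)$, where the paper simply cites the general theory of nuclear countably-Hilbert spaces (Lemma~\ref{lem-2-3}); your elaboration is a correct filling-in of that citation.
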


\begin{proof}
Clearly, conditions (2), (3) and (4) are equivalent each other because $\mathcal{S}(M)$ is a nuclear space (see Lemma~\ref{lem-2-3}).
Using Theorems~\ref{thr-4-2} and \ref{thr-4-3}, we immediately know that conditions (1) and (4) are also equivalent.
\end{proof}

\section{Applications}

In the last section we show some applications of our main results.

Recall that the system $\{Z_{\sigma}\mid \sigma\in \Gamma\}$ is an orthonormal basis of $\mathcal{L}^2(M)$. Now if we write
\begin{equation}\label{eq-5-1}
  \Psi_n^{(0)} = \sum_{\tau \in \Gamma\!_{n]}}Z_{\tau},\quad n\geq 0,
\end{equation}
then $\big(\Psi_n^{(0)}\big)_{n\geq 0} \subset \mathcal{L}^2(M)$, and moreover $\big(\Psi_n^{(0)}\big)_{n\geq 0}$ is a martingale with respect to filtration
$\mathfrak{F}=(\mathcal{F}_n)_{n\geq 0}$. However,  $\big(\Psi_n^{(0)}\big)_{n\geq 0}$ is not convergent in $\mathcal{L}^2(M)$ since
\begin{equation}\label{eq-5-2}
  \|\Psi_n^{(0)}\|= \sqrt{\#(\Gamma\!_{n]})} = 2^{\frac{n+1}{2}} \rightarrow \infty\quad (\text{as $n\rightarrow \infty$}),
\end{equation}
where $\#(\Gamma\!_{n]})$ means the cardinality of $\Gamma\!_{n]}$ as a set and $\|\cdot\|$ the norm in $\mathcal{L}^2(M)$.

\begin{proposition}\label{prop-5-1}
The sequence $\big(\Psi_n^{(0)}\big)_{n\geq 0}$ defined above is an $M$-generalized martingale, and moreover it is strongly convergent in $\mathcal{S}^*(M)$.
\end{proposition}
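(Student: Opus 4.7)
The plan is to first identify the Fock transform of $\Psi_n^{(0)}$ explicitly, then deduce the $M$-generalized martingale property (either directly or via Theorem~\ref{thr-4-1}), and finally invoke Theorem~\ref{thr-4-2} to obtain strong convergence in $\mathcal{S}^*(M)$.

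First I would compute the Fock transform. Since $\Psi_n^{(0)}\in\mathcal{L}^2(M)\subset \mathcal{S}^*(M)$ and $\{Z_\sigma\mid\sigma\in\Gamma\}$ is an orthonormal basis of $\mathcal{L}^2(M)$, the defining formula of the Fock transform together with the explicit expansion of $\Psi_n^{(0)}$ yields
\begin{equation*}
\widehat{\Psi_n^{(0)}}(\sigma) = \sum_{\tau\in\Gamma\!_{n]}}\langle Z_\tau,Z_\sigma\rangle = \mathbf{I}_{n]}(\sigma),\quad \sigma\in\Gamma,\ n\geq 0.
\end{equation*}
With this explicit formula in hand, the $M$-generalized martingale identity reduces to $\mathbf{I}_{n]}(\sigma)=\mathbf{I}_{n]}(\sigma)\mathbf{I}_{n+1]}(\sigma)$, which is immediate from the inclusion $\Gamma\!_{n]}\subset \Gamma\!_{n+1]}$. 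Alternatively, since the excerpt already observes that $\bigl(\Psi_n^{(0)}\bigr)_{n\geq 0}$ is an $\mathfrak{F}$-martingale inside $\mathcal{L}^2(M)$, Theorem~\ref{thr-4-1} applies directly and gives the same conclusion.

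For the strong convergence in $\mathcal{S}^*(M)$, I would apply Theorem~\ref{thr-4-2}. The uniform bound
\begin{equation*}
\sup_{n\geq 0}\bigl|\widehat{\Psi_n^{(0)}}(\sigma)\bigr| \leq 1 = \lambda_\sigma^{0},\quad \sigma\in\Gamma,
\end{equation*}
is exactly condition (2) of that theorem with $C=1$ and $p=0$, so strong convergence in $\mathcal{S}^*(M)$ follows at once. The limit $\Phi$ is characterized by $\widehat{\Phi}(\sigma)=1$ for every $\sigma\in\Gamma$, and Lemma~\ref{lem-2-6} then locates $\Phi$ in $\mathcal{S}_q^*(M)$ for every $q>\tfrac{1}{2}$.

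No substantive obstacle arises; the entire purpose of the example is to showcase Theorem~\ref{thr-4-2}, exhibiting an $\mathfrak{F}$-martingale whose $\mathcal{L}^2(M)$-norm diverges (as the excerpt shows via $\|\Psi_n^{(0)}\|=2^{(n+1)/2}\to\infty$) but which nevertheless converges strongly in the larger generalized-functional space $\mathcal{S}^*(M)$.
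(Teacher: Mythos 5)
Your proposal is correct and follows essentially the same route as the paper: compute $\widehat{\Psi_n^{(0)}}(\sigma)=\mathbf{I}_{n]}(\sigma)$, obtain the $M$-generalized martingale property via Theorem~\ref{thr-4-1}, and conclude strong convergence from Theorem~\ref{thr-4-2} with $C=1$, $p=0$. The extra identification of the limit through $\widehat{\Phi}(\sigma)=1$ is a harmless and correct addition not present in the paper's proof.
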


\begin{proof}
According to Theorem~\ref{thr-4-1}, $\big(\Psi_n^{(0)}\big)_{n\geq 0}$ is certainly an $M$-generalized martingale.
On the other hand, in viewing the relation between the canonical bilinear form on $\mathcal{S}^*(M)\times \mathcal{S}(M)$ and the inner product in $\mathcal{L}^2(M)$, we have
\begin{equation}\label{eq-5-3}
  \widehat{\Psi_n^{(0)}}(\sigma)
   = \langle\!\langle \Psi_n^{(0)}, Z_{\sigma} \rangle\!\rangle
   = \langle \Psi_n^{(0)}, Z_{\sigma} \rangle
   = \mathbf{I}_{n]}(\sigma),\quad \sigma \in \Gamma,\, n\geq 0,
\end{equation}
which implies that
\begin{equation*}
  \sup_{n\geq 0}\big|\widehat{\Psi_n^{(0)}}(\sigma)\big| \leq C \lambda_{\sigma}^p,\quad \sigma \in \Gamma
\end{equation*}
with $C=1$ and $p=0$. It then follows from Theorem~\ref{thr-4-2} that $\big(\Psi_n^{(0)}\big)_{n\geq 0}$ is strongly convergent in $\mathcal{S}^*(M)$.
\end{proof}

Recall that \cite{wang-chen}, for two generalized functionals $\Phi_1$, $\Phi_2 \in \mathcal{S}^*(M)$, their convolution $\Phi_1\ast\Phi_2$ is defined by
\begin{equation}\label{}
 \widehat{ \Phi_1\ast\Phi_2}(\sigma) = \widehat{ \Phi_1}(\sigma)\widehat{\Phi_2}(\sigma),\quad \sigma \in \Gamma.
\end{equation}

The next theorem provides a method to construct an $M$-generalized martingale through the $M$-generalized martingale$\big(\Psi_n^{(0)}\big)_{n\geq 0}$ defined in (\ref{eq-5-1}).

\begin{theorem}\label{thr-5-1}
Let $\Phi \in \mathcal{S}^*(M)$ be a generalized functional and define
\begin{equation}\label{}
  \Phi_n =  \Psi_n^{(0)} \ast\Phi,\quad n\geq 0.
\end{equation}
Then $(\Phi_n)_{n\geq 0}$ is an $M$-generalized martingale, and moreover it converges strongly to $\Phi$ in $\mathcal{S}^*(M)$.
\end{theorem}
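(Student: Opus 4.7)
The plan is to reduce everything to the Fock transform side, where the convolution becomes pointwise multiplication by $\mathbf{I}_{n]}$, and then invoke the criteria of Section~3.

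First I would compute $\widehat{\Phi_n}(\sigma)$ explicitly. By the definition of convolution and the identity $\widehat{\Psi_n^{(0)}}(\sigma) = \mathbf{I}_{n]}(\sigma)$ established in (\ref{eq-5-3}), we get
\begin{equation*}
\widehat{\Phi_n}(\sigma) = \widehat{\Psi_n^{(0)}}(\sigma)\,\widehat{\Phi}(\sigma) = \mathbf{I}_{n]}(\sigma)\,\widehat{\Phi}(\sigma),\quad \sigma\in\Gamma,\ n\geq 0.
\end{equation*}
From here the martingale identity (\ref{eq-4-3}) is immediate: since $\Gamma\!_{n]}\subset \Gamma\!_{n+1]}$, one has $\mathbf{I}_{n]}(\sigma)\mathbf{I}_{n+1]}(\sigma)=\mathbf{I}_{n]}(\sigma)$, so
\begin{equation*}
\mathbf{I}_{n]}(\sigma)\widehat{\Phi_{n+1}}(\sigma) = \mathbf{I}_{n]}(\sigma)\mathbf{I}_{n+1]}(\sigma)\widehat{\Phi}(\sigma) = \mathbf{I}_{n]}(\sigma)\widehat{\Phi}(\sigma) = \widehat{\Phi_n}(\sigma).
\end{equation*}

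For the strong convergence to $\Phi$, I would apply Theorem~\ref{thr-3-1}. To verify condition (1), fix $\sigma\in\Gamma$; since $\sigma$ is finite, there is an $n_0$ with $\sigma\subset\{0,1,\dots,n_0\}$, so $\mathbf{I}_{n]}(\sigma)=1$ for all $n\geq n_0$, giving $\widehat{\Phi_n}(\sigma)=\widehat{\Phi}(\sigma)$ eventually, hence $\widehat{\Phi_n}(\sigma)\to\widehat{\Phi}(\sigma)$. For condition (2), the characterization in Lemma~\ref{lem-2-6} applied to $\Phi\in\mathcal{S}^*(M)$ yields constants $C\geq 0$ and $p\geq 0$ with $|\widehat{\Phi}(\sigma)|\leq C\lambda_\sigma^p$, and because $|\mathbf{I}_{n]}(\sigma)|\leq 1$ we obtain
\begin{equation*}
\sup_{n\geq 0}|\widehat{\Phi_n}(\sigma)| \leq |\widehat{\Phi}(\sigma)| \leq C\lambda_\sigma^p,\quad \sigma\in\Gamma.
\end{equation*}
Theorem~\ref{thr-3-1} then delivers strong convergence of $(\Phi_n)$ to $\Phi$ in $\mathcal{S}^*(M)$.

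There is essentially no obstacle here; the entire argument is a bookkeeping exercise in Fock transforms once one recognizes that convolution with $\Psi_n^{(0)}$ amounts to truncating the transform to $\Gamma\!_{n]}$. The only point worth being careful about is ensuring that the constants $C,p$ used to control $\widehat{\Phi}$ can be transferred verbatim to the supremum over $n$, which is automatic since multiplication by $\mathbf{I}_{n]}$ never increases absolute value.
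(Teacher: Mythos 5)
Your proposal is correct and follows essentially the same route as the paper: compute $\widehat{\Phi_n}(\sigma)=\mathbf{I}_{n]}(\sigma)\widehat{\Phi}(\sigma)$, deduce the martingale identity from $\mathbf{I}_{n]}\mathbf{I}_{n+1]}=\mathbf{I}_{n]}$, and verify the pointwise convergence and uniform bound of the Fock transforms. The only cosmetic difference is that you invoke Theorem~\ref{thr-3-1} while the paper cites Theorem~\ref{thr-3-2}; your choice is if anything slightly cleaner, since it identifies the limit as $\Phi$ directly rather than via existence of a limit plus injectivity of the Fock transform.
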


\begin{proof}
By Lemma~\ref{lem-2-6}, there exist some constants $C\geq 0$ and $p\geq 0$ such that
\begin{equation}\label{eq-5-6}
  |\widehat{\Phi}(\sigma)| \leq C\lambda_{\sigma}^p,\quad \sigma \in \Gamma.
\end{equation}
On the other hand, by using (\ref{eq-5-3}), we get
\begin{equation}\label{eq-5-7}
  \widehat{\Phi_n}(\sigma)
  =\widehat{\Psi_n^{(0)}}(\sigma)\widehat{\Phi}(\sigma)
  =\mathbf{I}_{n]}(\sigma)\widehat{\Phi}(\sigma),\quad \sigma \in \Gamma,\, n\geq 0,
\end{equation}
which, together with the fact $\mathbf{I}_{n]}(\sigma)\mathbf{I}_{n+1]}(\sigma)=\mathbf{I}_{n]}(\sigma)$, gives
\begin{equation*}
  \widehat{\Phi_n}(\sigma)= \mathbf{I}_{n]}(\sigma)\widehat{\Phi_{n+1}}(\sigma),\quad \sigma \in \Gamma,\, n\geq 0.
\end{equation*}
Thus $(\Phi_n)_{n\geq 0}$ is an $M$-generalized martingale. Additionally, it easily follows from (\ref{eq-5-6}) and (\ref{eq-5-7}) that
$\widehat{\Phi_n}(\sigma)\rightarrow \widehat{\Phi}(\sigma)$ for each $\sigma \in \Gamma$ and
\begin{equation*}
  \sup_{n\geq 0}|\widehat{\Phi_n}(\sigma)|
    =\sup_{n\geq 0}\big[\mathbf{I}_{n]}(\sigma)\big]|\widehat{\Phi}(\sigma)|
  \leq C\lambda_{\sigma}^p,\quad \sigma \in \Gamma.
\end{equation*}
Therefore, by Theorem~\ref{thr-3-2}, we finally find $(\Phi_n)_{n\geq 0}$ converges strongly to $\Phi$.
\end{proof}

\section*{Appendix}

In this appendix, we provide some basic notions and facts about discrete-time normal martingales. For details we refer to \cite{privault,wang-lc}.

Let $(\Omega, \mathcal{F}, P)$ be a given probability space with $\mathbb{E}$ denoting the expectation with respect to $P$.
We denote by $\mathcal{L}^{2}(\Omega, \mathcal{F}, P)$ the usual Hilbert space of
square integrable complex-valued functions on $(\Omega, \mathcal{F}, P)$
and use $\langle\cdot,\cdot\rangle$ and $\|\cdot\|$ to mean its inner product and norm, respectively.
\vskip2mm

\noindent {\bf Definition A.1.}\label{def-A-1}
A stochastic process $M=(M_n)_{n\in \mathbb{N}}$ on
$(\Omega, \mathcal{F}, P)$ is called a discrete-time normal martingale if it is square integrable and satisfies:
\begin{enumerate}[(i)]
  \item $\mathbb{E}[M_0 | \mathcal{F}_{-1}] = 0$ and $\mathbb{E}[M_n | \mathcal{F}_{n-1}] = M_{n-1}$
 for $n\geq 1$;
  \item $\mathbb{E}[M_0^2 | \mathcal{F}_{-1}] = 1$ and $\mathbb{E}[M_n^2 | \mathcal{F}_{n-1}] = M_{n-1}^2 +1$
for $n\geq 1$,
\end{enumerate}
where $\mathcal{F}_{-1}=\{\emptyset, \Omega\}$,
$\mathcal{F}_n = \sigma(M_k; 0\leq k \leq n)$ for $n \in \mathbb{N}$ and $\mathbb{E}[\cdot | \mathcal{F}_{k}]$ means the conditional expectation.
\vskip2mm

Let $M=(M_n)_{n\in \mathbb{N}}$ be a discrete-time normal martingale on $(\Omega, \mathcal{F}, P)$.
Then one can construct from $M$ a process $Z=(Z_n)_{n\in \mathbb{N}}$ as

\begin{equation*}
Z_0=M_0,\quad Z_n = M_n-M_{n-1},\quad  n\geq 1. \eqno(\mathrm{A}.1)
\end{equation*}
It can be verified that $Z$ admits the following properties:
\begin{equation*}
    \mathbb{E}[Z_n | \mathcal{F}_{n-1}] =0\quad \text{and}\quad  \mathbb{E}[Z_n^2 | \mathcal{F}_{n-1}] =1,\quad  n\in \mathbb{N}. \eqno(\mathrm{A}.2)
\end{equation*}
Thus, it can be viewed as a discrete-time noise.
\vskip2mm

\noindent {\bf Definition A.2.} \label{def-A-2}
Let $M=(M_n)_{n\in \mathbb{N}}$ be a discrete-time normal martingale. Then the process $Z$ defined by (A.2) is called the discrete-time normal noise associated
with $M$.
\vskip2mm

The next lemma shows
that, from the discrete-time normal noise $Z$, one can get an orthonormal system in $\mathcal{L}^2(\Omega, \mathcal{F}, P)$,
which is indexed by $\sigma \in \Gamma$.
\vskip2mm

\noindent {\bf Lemma A.1.}
Let $M=(M_n)_{n\in \mathbb{N}}$ be a discrete-time normal martingale and $Z=(Z_n)_{n\in \mathbb{N}}$ the discrete-time normal noise associated with $M$.
Define $Z_{\emptyset}=1$, where $\emptyset$ denotes the empty set, and
\begin{equation*}
    Z_{\sigma} = \prod_{i\in \sigma}Z_i,\quad \text{$\sigma \in \Gamma$, $\sigma \neq \emptyset$}. \eqno(\mathrm{A}.3)
\end{equation*}
Then $\{Z_{\sigma}\mid \sigma \in \Gamma\}$ forms a countable orthonormal system in
$\mathcal{L}^2(\Omega, \mathcal{F}, P)$.
\vskip2mm

Let $\mathcal{F}_{\infty}=\sigma(M_n; n\in \mathbb{N})$, the $\sigma$-field over $\Omega$ generated by $M$.
In the literature, $\mathcal{F}_{\infty}$-measurable functions on $\Omega$ are also known as functionals of $M$.
Thus elements of $\mathcal{L}^2(\Omega, \mathcal{F}_{\infty}, P)$ can be called square integrable functionals of $M$.
For brevity, we usually denote by $\mathcal{L}^2(M)$ the space of square integrable functionals of $M$, namely
\begin{equation*}
\mathcal{L}^2(M)=\mathcal{L}^2(\Omega, \mathcal{F}_{\infty}, P). \eqno(\mathrm{A}.4)
\end{equation*}
\vskip2mm

\noindent {\bf Definition A.3.} \label{def-A-3}
The discrete-time normal martingale $M$ is said to have the chaotic representation property if
the system $\{Z_{\sigma}\mid \sigma \in \Gamma\}$ defined by (A.3) is total in $\mathcal{L}^2(M)$.
\vskip2mm

So, if the discrete-time normal martingale $M$ has the chaotic representation property, then the system $\{Z_{\sigma}\mid \sigma \in \Gamma\}$
is actually an orthonormal basis for $\mathcal{L}^2(M)$,
which is a closed subspace of $\mathcal{L}^2(\Omega, \mathcal{F}, P)$ as is known.
\vskip2mm

\noindent{\bf Remark A.1.}
\'{E}mery \cite{emery} called a $\mathbb{Z}$-indexed process $X=(X_n)_{n \in \mathbb{Z}}$ satisfying (A.2)
a novation and introduced the notion of the chaotic representation property for such a process.

\section*{Acknowledgement}

This work is supported by National Natural Science Foundation of China (Grant No. 11461061).

\end{document}